\newtheorem{theorem}{Theorem}
\newtheorem{lemma}[theorem]{Lemma}
\begin{document}

\title{\bf The Topological Trees with Extreme Matula Numbers}

\author{\Large Audace A. V. Dossou-Olory\\  Department of Mathematical Sciences \\ Stellenbosch University \\ Private Bag X1, Matieland 7602 \\ South Africa\\ {\tt audace@aims.ac.za}}

\date{}

\maketitle

\begin{abstract}
Denote by $p_m$ the $m$-th prime number ($p_1=2,~p_2=3,~p_3=5,~ p_4=7,~\ldots$). Let $T$ be a rooted tree with branches $T_1,T_2,\ldots,T_r$. The Matula number $M(T)$ of $T$ is $p_{M(T_1)}\cdot p_{M(T_2)}\cdot \ldots \cdot p_{M(T_r)}$, starting with $M(K_1)=1$. This number was put forward half a century ago by the American mathematician David Matula. In this paper, we prove that the star (consisting of a root and leaves attached to it) and the binary caterpillar (a binary tree whose internal vertices form a path starting at the root) have the smallest and greatest Matula number, respectively, over all topological trees (rooted trees without vertices of outdegree $1$) with a prescribed number of leaves -- the extreme values are also derived.
\end{abstract}

\maketitle

\section{Introduction}

Fifty years ago, the American mathematician David Matula gave an explicit bijection between the set of all rooted trees and the set of all positive integers~\cite{matula1968natural}. The bijection is described by means of prime numbers. Throughout, $p_m$ always means the $m$-th prime number (in ascending order); for example,
$$
p_1=2,~p_2=3,~p_3=5,~p_4=7,~p_5=11,~p_6=13, \ldots 
$$
The Matula number of the tree $K_1$ that has only one vertex is defined to be $1$, and if $T$ is a rooted tree with branches (the components that remain after deleting the root and all edges incident to it) $T_1,T_2,\ldots,T_r$, then the Matula number of $T$ -- henceforth denoted by $M(T)$ -- is given by 
\begin{align*}
M(T)=p_{M(T_1)}\cdot p_{M(T_2)}\cdot \ldots \cdot p_{M(T_r)}\,.
\end{align*}
For example, consider the rooted tree $T$ shown in Figure~\ref{rootedTree}; $T$ has three branches $T_1,T_2,T_3$. We have
\begin{align*}
M(T_1)&=p_{M(K_1)}=p_1=2,~M(T_2)=p_{M(K_1)}\cdot p_{M(K_1)}=p_1^2=4,\\
M(T_3)&=M(K_1)=1\,,
\end{align*}
from which
\begin{align*}
M(T)=p_{M(T_1)}\cdot p_{M(T_2)} \cdot p_{M(T_3)}=p_2\cdot p_4 \cdot p_1=42
\end{align*}
is obtained.
\begin{figure}[htbp]\centering
\begin{tikzpicture}[line cap=round,line join=round,>=triangle 45,x=1.0cm,y=1.0cm, scale=1.2]
\draw [line width=1.pt] (7.,10.)-- (4.,8.);
\draw [line width=1.pt] (4.,8.)-- (4.,6.);
\draw [line width=1.pt] (7.,10.)-- (7.,8.);
\draw [line width=1.pt] (7.,8.)-- (6.,7.);
\draw [line width=1.pt] (7.,8.)-- (8.,7.);
\draw [line width=1.pt] (7.,10.)-- (9.926,6.996);
\draw [rotate around={-89.3969088056204:(4.011,6.955)},dash pattern=on 5pt off 5pt] (4.011,6.955) ellipse (1.262574606562399cm and 0.7084833358210949cm);
\draw [rotate around={-88.69804732740158:(6.967,7.128)},dash pattern=on 5pt off 5pt] (6.967,7.128) ellipse (1.3821850905018496cm and 1.2946268282426416cm);
\draw [rotate around={90.:(9.904,6.765)},dash pattern=on 5pt off 5pt] (9.904,6.765) ellipse (0.7069289531052008cm and 0.4574139752329399cm);
\draw (3.282,7.414) node[anchor=north west] {$T_1$};
\draw (6.626,7.172) node[anchor=north west] {$T_2$};
\draw (9.618,6.93) node[anchor=north west] {$T_3$};
\begin{scriptsize}
\draw [fill=black] (4.,8.) circle (2.0pt);
\draw [fill=black] (4.,6.) circle (2.0pt);
\draw [fill=black] (7.,8.) circle (2.0pt);
\draw [fill=black] (6.,7.) circle (2.0pt);
\draw [fill=black] (8.,7.) circle (2.0pt);
\draw [fill=black] (9.926,6.996) circle (2.0pt);
\draw [fill=black] (7.,10.) circle (3.0pt);
\end{scriptsize}
\end{tikzpicture}
\caption{A rooted tree $T$ with three branches $T_1,T_2,T_3$.}\label{rootedTree}
\end{figure}
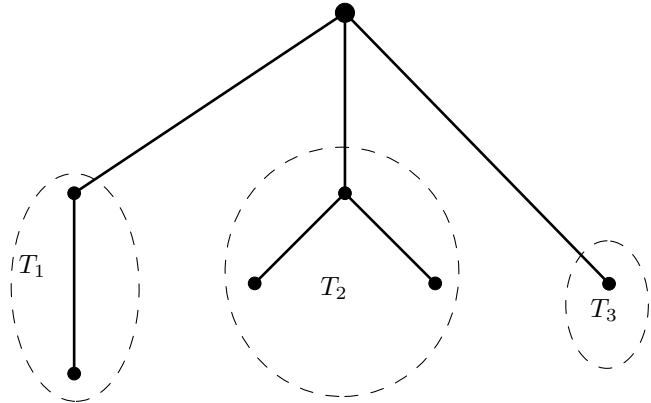

The definition of Matula numbers also suggests that for every positive integer $n$, there is a unique rooted tree $T$ whose Matula number is $n$. This observation follows by induction on $n$. Indeed, if $n>1$, then write $n$ in its (unique) prime decomposition, say $n=t_1\cdot t_2\cdot \ldots \cdot t_l$, where $t_1,t_2,\ldots,t_l$ are all primes (not necessarily distinct!). Let $m_i$ be the unique positive integer such that $t_i=p_{m_i}$; form a rooted tree $T$ by joining the roots of the trees with Matula numbers $m_1,m_2,\ldots,m_l$ to a new common vertex (the root of $T$). The Matula number of this tree is $n$. Hence, there is a bijection between the set of all rooted trees and the set of all positive integers.

\medskip
As mentioned by Ivan Gutman and Aleksandar Ivi{\'c} in their 1994 paper~\cite{gutman1994graphs}, at the time when the bijection was found by Matula, the mathematics community had considered this discovery as `just' a curiosity driven observation. One had to wait until two decades later when the American chemist Seymour Elk proved in subsequent papers~\cite{elk1995expansion,elk1994further,elk1990problem,elk1989problem} that Matula numbers can be useful for canonical nomenclatures of alkanes and all potential combinations of polybenzene or polymantane modules. In 1992, Gutman, Ivi{\'c} and Elk~\cite{IvanElk1192} obtained further results on how the molecular structures of certain organic compounds can be encoded by means of Matula numbers.

Since the Matula number also appears to be a natural (rooted) tree invariant, Gutman and Yeh listed in their 1993 paper~\cite{gutman1993deducing} ten different parameters (such as the number of vertices, number of leaves, minimum vertex degree, maximum vertex degree, diameter, etc.) of a rooted tree that can be obtained directly from the associated Matula number. Three years later, Gutman and Ivi{\'c}~\cite{gutman1996matula} proved that for $n\geq 5$, the rooted tree obtained by taking a root path (rooted at one of its endvertices) on $n-3$ vertices and attaching three leaves to the other endvertex of the path, is the one that has the maximum Matula number over the set of all rooted trees with $n$ vertices. For the minimum, they showed that for $n\geq 3$ and depending on the residue of $n$ modulo $3$, the rooted tree depicted in Figure~\ref{minVert} is minimal among all $n$-vertex rooted trees.
\begin{figure}[htbp]\centering
\begin{tikzpicture}[line cap=round,line join=round,>=triangle 45,x=1.0cm,y=1.0cm, scale=0.45]
\draw [line width=1.pt] (4.,10.)-- (3.,9.);
\draw [line width=1.pt] (3.,9.)-- (2.,8.);
\draw [line width=1.pt] (2.,8.)-- (1.,7.);
\draw [line width=1.pt] (4.,10.)-- (5.,9.);
\draw [line width=1.pt] (5.,9.)-- (6.,8.);
\draw [line width=1.pt] (6.,8.)-- (7.,7.);
\draw (0.4498945454545462,6.571049090909086) node[anchor=north west] {$1$};
\draw (5.495349090909093,6.4255945454545405) node[anchor=north west] {$\frac{n}{3} -1$};
\draw [line width=1.pt] (12.,10.)-- (11.,9.);
\draw [line width=1.pt] (11.,9.)-- (10.,8.);
\draw [line width=1.pt] (10.,8.)-- (9.,7.);
\draw [line width=1.pt] (12.,10.)-- (13.,9.);
\draw [line width=1.pt] (13.,9.)-- (14.,8.);
\draw [line width=1.pt] (14.,8.)-- (15.,7.);
\draw (8.54080363636364,6.352867272727267) node[anchor=north west] {$1$};
\draw (13.913530909090915,6.398321818181813) node[anchor=north west] {$\frac{n-1}{3}$};
\draw [line width=1.pt] (4.,10.)-- (3.,10.);
\draw [line width=1.pt] (3.,10.)-- (2.,10.);
\draw [line width=1.pt] (20.,10.)-- (19.,9.);
\draw [line width=1.pt] (19.,9.)-- (18.,8.);
\draw [line width=1.pt] (18.,8.)-- (17.,7.);
\draw [line width=1.pt] (20.,10.)-- (21.,9.);
\draw [line width=1.pt] (21.,9.)-- (22.,8.);
\draw [line width=1.pt] (22.,8.)-- (23.,7.);
\draw (20.958985454545464,6.343776363636359) node[anchor=north west] {$\frac{n-2}{3}-1$};
\draw [line width=1.pt] (20.,10.)-- (19.,10.);
\draw [line width=1.pt] (19.,10.)-- (18.,10.);
\draw [line width=1.pt] (20.,10.)-- (21.,10.);
\draw (16.66807636363637,6.33468545454545) node[anchor=north west] {$1$};
\draw [line width=1.pt] (21.,10.)-- (22.,10.);

\draw [fill=black] (4.,10.) ++(-4.5pt,0 pt) -- ++(4.5pt,4.5pt)--++(4.5pt,-4.5pt)--++(-4.5pt,-4.5pt)--++(-4.5pt,4.5pt);
\draw [fill=black] (3.,9.) circle (2.5pt);
\draw [fill=black] (2.,8.) circle (2.5pt);
\draw [fill=black] (1.,7.) circle (2.5pt);
\draw [fill=black] (5.,9.) circle (2.5pt);
\draw [fill=black] (6.,8.) circle (2.5pt);
\draw [fill=black] (7.,7.) circle (2.5pt);
\draw [fill=black] (4.,7.) circle (1.pt);
\draw [fill=black] (3.42,7.) circle (1.pt);
\draw [fill=black] (4.52,7.02) circle (1.pt);
\draw [fill=black] (12.,10.) ++(-4.5pt,0 pt) -- ++(4.5pt,4.5pt)--++(4.5pt,-4.5pt)--++(-4.5pt,-4.5pt)--++(-4.5pt,4.5pt);
\draw [fill=black] (11.,9.) circle (2.5pt);
\draw [fill=black] (10.,8.) circle (2.5pt);
\draw [fill=black] (9.,7.) circle (2.5pt);
\draw [fill=black] (13.,9.) circle (2.5pt);
\draw [fill=black] (14.,8.) circle (2.5pt);
\draw [fill=black] (15.,7.) circle (2.5pt);
\draw [fill=black] (12.,7.) circle (1.pt);
\draw [fill=black] (11.42,7.) circle (1.pt);
\draw [fill=black] (12.52,7.02) circle (1.pt);
\draw [fill=black] (3.,10.) circle (2.5pt);
\draw [fill=black] (2.,10.) circle (2.5pt);
\draw [fill=black] (20.,10.) ++(-4.5pt,0 pt) -- ++(4.5pt,4.5pt)--++(4.5pt,-4.5pt)--++(-4.5pt,-4.5pt)--++(-4.5pt,4.5pt);
\draw [fill=black] (19.,9.) circle (2.5pt);
\draw [fill=black] (18.,8.) circle (2.5pt);
\draw [fill=black] (17.,7.) circle (2.5pt);
\draw [fill=black] (21.,9.) circle (2.5pt);
\draw [fill=black] (22.,8.) circle (2.5pt);
\draw [fill=black] (23.,7.) circle (2.5pt);
\draw [fill=black] (20.,7.) circle (1.pt);
\draw [fill=black] (19.42,7.) circle (1.pt);
\draw [fill=black] (20.52,7.02) circle (1.pt);
\draw [fill=black] (19.,10.) circle (2.5pt);
\draw [fill=black] (18.,10.) circle (2.5pt);
\draw [fill=black] (21.,10.) circle (2.5pt);
\draw [fill=black] (22.,10.) circle (2.5pt);
\end{tikzpicture}
\caption{The minimal trees among all $n$-vertex rooted trees according to the residue of $n$ modulo $3$ (the root is the square vertex on top)~\cite{gutman1996matula}.}\label{minVert}
\end{figure}
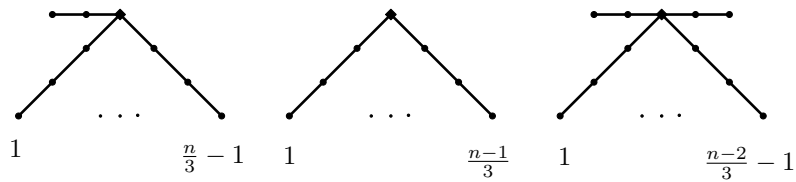

In 2012, Deutsch~\cite{deutsch2012rooted} showed how to determine further properties (mostly distance- and degree-based) of a rooted tree directly from the corresponding Matula number. As such, we have the path length, number of subtrees, Wiener index, terminal Wiener index, Wiener polynomial, first Zagreb index, second Zagreb index, just to mention a few.

In this short note, we are concerned with the Matula extremal trees (and thus the Matula extreme numbers) among the so-called topological trees, given the number of leaves.

In~\cite{bergeron1998combinatorial} a topological tree is defined as a tree without vertices of degree $2$. In this note (see also~\cite{DossouOloryWagnerPaper2}), a rooted tree will be called a \textit{topological} tree if it does not have a vertex of outdegree $1$. The purpose of avoiding vertices of outdegree $1$ in the tree is rather natural: we are collecting trees according to the number of leaves, and there are infinitely many trees with the same number of leaves (e.g. all paths; more generally, one can subdivide any set of edges in a given tree).

We mention that trees without vertices of degree $2$ (also known as \emph{series-reduced} or \emph{homeomorphically irreducible} trees) have already been object of study in the past by mathematicians and theoretical biologists; see e.g. Jamison~\cite{jamison1983average}, Bergeron et al.~\cite{bergeron1998combinatorial}, Allman and Rhodes~\cite{allman2004mathematical}, or the sequence~\url{A000669} in~\cite{oeis}.

\section{Getting to the extremal trees}\label{Towards}
Let us first give more definitions.
\begin{itemize}
\item A \textit{binary} tree is a rooted tree in which every vertex has outdegree exactly $0$ or $2$;
\item A \textit{star} is a rooted tree in which all leaves are adjacent to the root (see Figure~\ref{binaAndStars});
\item A binary \textit{caterpillar} is a binary tree whose internal vertices lie on a single path starting at the root (see Figure~\ref{binaAndStars}). 
\end{itemize}
We denote the star with $n$ leaves by $K_{1,n}$ and the $n$-leaf binary caterpillar by $F_n$. We shall prove that the star $K_{1,n}$ minimises the Matula number and the binary caterpillar $F_n$ maximises the Matula number over all topological trees with $n$ leaves. We also derive the extreme values, i.e. an expression for both $M(K_{1,n})$ and $M(F_n)$.

\begin{figure}[htbp]\centering

\begin{tikzpicture}
\draw [fill=black] (6,0) circle (2.5 pt);

\draw [fill=black] (5.5,-0.5) circle (1.5 pt);
\draw [fill=black] (6.5,-0.5) circle (1.5 pt);

\draw [fill=black] (6,-1) circle (1.5 pt);
\draw [fill=black] (7,-1) circle (1.5 pt);

\draw [fill=black] (6.5,-1.5) circle (1.5 pt);
\draw [fill=black] (7.5,-1.5) circle (1.5 pt);

\draw [fill=black] (7,-2) circle (1.5 pt);
\draw [fill=black] (8,-2) circle (1.5 pt);


\draw [line width=1pt] (6,0) -- (5.5,-0.5);
\draw [line width=1pt] (6,0) -- (6.5,-0.5);

\draw [line width=1pt] (6.5,-0.5)  -- (6,-1);
\draw [line width=1pt] (6.5,-0.5)  -- (7,-1);

\draw [line width=1pt] (7,-1)  -- (6.5,-1.5);
\draw [line width=1pt] (7,-1)  -- (7.5,-1.5);

\draw [line width=1pt] (7.5,-1.5)  -- (7,-2);
\draw [line width=1pt] (7.5,-1.5)  -- (8,-2);


\draw [fill=black] (12,0) circle (2.5 pt);

\draw [fill=black] (11.5,-2) circle (1.5 pt);
\draw [fill=black] (11,-2) circle (1.5 pt);
\draw [fill=black] (10.5,-2) circle (1.5 pt);

\draw [fill=black] (12.5,-2) circle (1.5 pt);
\draw [fill=black] (13,-2) circle (1.5 pt);
\draw [fill=black] (13.5,-2) circle (1.5 pt);

\draw [line width=1pt] (12,0) -- (11.5,-2);
\draw [line width=1pt] (12,0) -- (11.,-2);
\draw [line width=1pt] (12,0) -- (10.5,-2);

\draw [line width=1pt] (12,0) -- (12.5,-2);
\draw [line width=1pt] (12,0) -- (13.,-2);
\draw [line width=1pt] (12,0) -- (13.5,-2);
\end{tikzpicture}
\caption{The binary caterpillar $F_5$ with five leaves (left), and the star $K_{1,6}$ with six leaves (right).} \label{binaAndStars}

\end{figure}
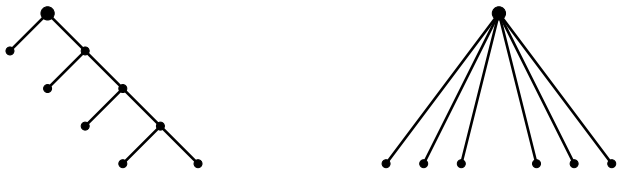

Note the following observation, which is analogous to a transformation used by the authors of~\cite{gutman1996matula} in their context of trees with a given number of vertices. Let us be given two arbitrary topological trees $T_1$ and $T_2$. Denote by $S_{1,2}$ the topological tree with the two branches $T_1$ and $T_2$. Then we have 
\begin{align*}
p_{M(T_1)} \cdot p_{M(T_2)} < p_{M(S_{1,2})}\,.
\end{align*} 
To see this, simply note that $M(S_{1,2})=p_{M(T_1)} \cdot p_{M(T_2)}$ by definition, and thus
\begin{align*}
p_{M(S_{1,2})}=p_{p_{M(T_1)} \cdot p_{M(T_2)}} > p_{M(T_1)} \cdot p_{M(T_2)}
\end{align*}
as $p_m$ (the $m$-th prime number) is greater than $m$ for all $m$.

Let $T$ be a topological tree with branches $T_1,T_2,\ldots,T_r$ such that $r\geq 3$. Again, denote by $S_{1,2}$ the topological tree whose branches are $T_1$ and $T_2$. Further, denote by $F(T)$ the rooted tree whose branches are $S_{1,2},T_3,\ldots,T_r$; see Figure~\ref{FAndFT} for a picture. Obviously, $T$ and $F(T)$ have the same number of leaves.

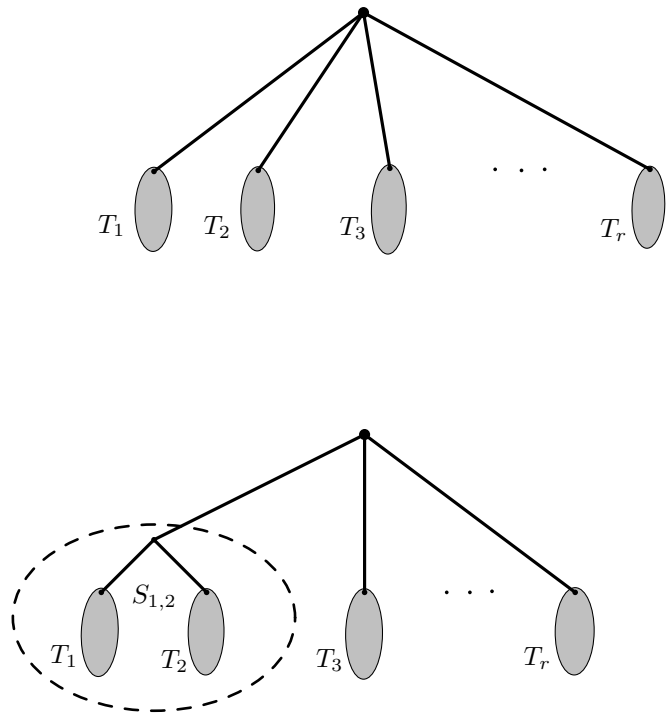
\begin{figure}[htbp]\centering
\begin{tikzpicture}[line cap=round,line join=round,>=triangle 45,x=1.0cm,y=1.0cm, scale=0.7]
	\draw (4.4,3.32) node[anchor=north west] {$S_{1,2}$};
	\draw [line width=1.2pt] (5,11)-- (8.98,14.02);
	\draw [line width=1.2pt] (8.98,14.02)-- (6.98,11.02);
	\draw [line width=1.2pt] (8.98,14.02)-- (9.48,11.06);
	\draw [line width=1.2pt] (8.98,14.02)-- (14.42,11.04);
	\draw (3.76,10.36) node[anchor=north west] {$T_1$};
	\draw (5.77,10.34) node[anchor=north west] {$T_2$};
	\draw (8.35,10.32) node[anchor=north west] {$T_3$};
	\draw (13.3,10.28) node[anchor=north west] {$T_r$};
	\draw [rotate around={89.2:(4.99,10.28)},fill=black,fill opacity=0.25] (4.99,10.28) ellipse (0.8cm and 0.35cm);
	\draw [rotate around={89.22:(6.97,10.29)},fill=black,fill opacity=0.25] (6.97,10.29) ellipse (0.8cm and 0.32cm);
	\draw [rotate around={88.53:(9.46,10.28)},fill=black,fill opacity=0.25] (9.46,10.28) ellipse (0.85cm and 0.33cm);
	\draw [rotate around={87.61:(14.39,10.32)},fill=black,fill opacity=0.25] (14.39,10.32) ellipse (0.78cm and 0.31cm);
	\draw [line width=1.2pt] (5,4)-- (9,6);
	\draw [line width=1.2pt] (9,6)-- (9,3);
	\draw [line width=1.2pt] (9,6)-- (13,3);
	
	\draw (2.86,2.18) node[anchor=north west] {$T_1$};
	\draw (4.96,2.08) node[anchor=north west] {$T_2$};
	\draw (7.9,2.1) node[anchor=north west] {$T_3$};
	\draw (11.85,2.14) node[anchor=north west] {$T_r$};
	\draw [rotate around={87.74:(3.97,2.24)},fill=black,fill opacity=0.25] (3.97,2.24) ellipse (0.84cm and 0.35cm);
	\draw [rotate around={89.24:(5.99,2.25)},fill=black,fill opacity=0.25] (5.99,2.25) ellipse (0.82cm and 0.34cm);
	\draw [line width=1.2pt] (4,3)-- (5,4);
	\draw [line width=1.2pt] (5,4)-- (6,3);
	\draw [rotate around={89.27:(8.99,2.21)},fill=black,fill opacity=0.25] (8.99,2.21) ellipse (0.86cm and 0.35cm);
	\draw [rotate around={89.23:(12.99,2.26)},fill=black,fill opacity=0.25] (12.99,2.26) ellipse (0.83cm and 0.37cm);
	\draw [rotate around={0:(5,2.52)},line width=1.pt,dash pattern=on 5pt off 5pt] (5,2.52) ellipse (2.68cm and 1.77cm);
	
	\fill [color=black] (5,11) circle (1.5pt);
	\fill [color=black] (6.98,11.02) circle (1.5pt);
	\fill [color=black] (9.48,11.06) circle (1.5pt);
	\fill [color=black] (14.42,11.04) circle (1.5pt);
	\fill [color=black] (8.98,14.02) circle (3.0pt);
	\fill [color=black] (11.48,11.04) circle (1pt);
	\fill [color=black] (11.98,11.02) circle (1pt);
	\fill [color=black] (12.42,11.04) circle (1pt);
	\fill [color=black] (5,4) circle (1.5pt);
	\fill [color=black] (4,3) circle (1.5pt);
	\fill [color=black] (9,3) circle (1.5pt);
	\fill [color=black] (13,3) circle (1.5pt);
	\fill [color=black] (9,6) circle (3.0pt);
	\fill [color=black] (10.56,3) circle (1pt);
	\fill [color=black] (11,3) circle (1pt);
	\fill [color=black] (11.42,3.02) circle (1pt);
	\fill [color=black] (6,3) circle (1.5pt);
	\end{tikzpicture}
\caption{Illustration of the tree transformation $F$: A topological tree $T$ (top figure) and the corresponding tree $F(T)$ (bottom figure).}\label{FAndFT}
\end{figure}

Then it follows immediately from the previous observation and the definition of the Matula number that 
\begin{align*}
M(F(T)) > M(T)\,.
\end{align*}
Consequently, by repeatedly applying the tree transformation $F$, we then obtain from $T$ a new topological tree, say $T^{\prime}$ with exactly two branches and having the property that $M(T^{\prime}) > M(T)$. In the same way, repeated application of the same tree transformation $F$ to the branches of $T^{\prime}$ yields a binary tree.

Hence, the tree that has the maximum Matula number among all topological trees with a prescribed number of leaves must be a binary tree. On the other hand, it also follows from our discussion that the tree that minimises the Matula number among all topological trees with $n\geq 2$ leaves must have $n$ branches. We have then proved the following theorem:

\begin{theorem}
The star $K_{1,n}$ (consisting of a root with $n$ leaves attached to it) is the topological tree that has the smallest Matula number among all $n$-leaf topological trees. Moreover, $M(K_{1,n})=2^n$ for all $n>1$.
\end{theorem}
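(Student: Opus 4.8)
The plan is to run the reasoning of the discussion above in reverse. The key ingredients are already in hand: the operation $F$ strictly raises the Matula number, and it does so because it \emph{merges} two root branches into one, as quantified by $p_{M(T_1)}\cdot p_{M(T_2)} < p_{M(S_{1,2})}$. Since merging branches increases $M$, to \emph{minimise} $M$ we should instead split branches apart, making the root branches as numerous as possible. I would therefore aim to show that a minimiser admits no branch more complicated than a single leaf, which forces it to be the star.

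First I would fix a topological tree $T^{\ast}$ with $n$ leaves attaining the minimum of $M$ over all such trees; this minimiser exists because, for fixed $n$, there are only finitely many topological trees (each internal vertex has outdegree at least $2$, so the vertex count is bounded). Suppose, for contradiction, that some branch $B$ of $T^{\ast}$ is not a single leaf. As $T^{\ast}$ is topological, $B$ has no vertex of outdegree $1$, so $B$ has outdegree at least $2$; write its branches as $B_1,\dots,B_s$ with $s\ge 2$. I would then build a new tree $T^{\prime}$ by detaching $B_1,\dots,B_s$ from $B$ and attaching them directly to the root of $T^{\ast}$ alongside the remaining branches. This is exactly the inverse of the merging carried out by $F$ (applied $s-1$ times, or in one step using the obvious generalisation of the inequality above to $s$ factors).

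The decisive step is the comparison of Matula numbers. The branch $B$ contributes the single factor $p_{M(B)}$ to $M(T^{\ast})$, and by definition $M(B)=p_{M(B_1)}\cdots p_{M(B_s)}$; in $T^{\prime}$ this one factor is replaced by the product $p_{M(B_1)}\cdots p_{M(B_s)}$, while every other factor is unchanged. Since $p_m>m$ for all $m$,
\begin{align*}
p_{M(B)} = p_{\,p_{M(B_1)}\cdots p_{M(B_s)}} > p_{M(B_1)}\cdots p_{M(B_s)}\,,
\end{align*}
so $M(T^{\prime})<M(T^{\ast})$, contradicting minimality. Hence every branch of $T^{\ast}$ is a single leaf $K_1$; as there are $n$ leaves, there are exactly $n$ such branches, and $T^{\ast}=K_{1,n}$. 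The value then follows at once from the definition, $M(K_{1,n})=\prod_{i=1}^{n} p_{M(K_1)}=p_1^{\,n}=2^n$ for $n>1$.

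The one point I would treat with care—the main obstacle, such as it is—is checking that the detaching operation keeps us inside the class we are optimising over: that $T^{\prime}$ is still topological and still has exactly $n$ leaves. Both hold because the subtrees $B_1,\dots,B_s$ are relocated untouched, the only vertex removed is the old root of $B$ (which was internal, not a leaf), and the root of $T^{\ast}$ gains branches rather than losing them, so its outdegree rises from $r$ to $r-1+s\ge 2$ and no vertex of outdegree $1$ is ever created. Everything else reduces to the inequality $p_m>m$ already used above.
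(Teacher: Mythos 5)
Your proposal is correct and is essentially the paper's own argument: the paper establishes $p_{M(T_1)}\cdot p_{M(T_2)} < p_{M(S_{1,2})}$ via $p_m>m$ and notes that the minimiser must therefore have $n$ branches; you simply run that merging transformation in reverse and spell out the details (existence of a minimiser, preservation of the topological property and leaf count) that the paper leaves implicit.
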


\section{Finding the maximal topological tree}

We begin with a lemma:

\begin{lemma}\label{lemforqk}
Let the sequence $(q_k)_{k\geq 1}$ of positive integers be defined recursively by
\begin{align*}
q_1=1~\text{and}~ q_k=2 p_{q_{k-1}}~\text{for}~k>1\,.
\end{align*}
Then we have
\begin{align*}
p_{q_{k_1}} \cdot p_{q_{k_2}} \leq q_{k_1+k_2}
\end{align*}
for all pairs $(k_1,k_2)$ of positive integers.
\end{lemma}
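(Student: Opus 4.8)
The plan is to exploit the defining relation $q_{k}=2p_{q_{k-1}}$, which rearranges to the identity $p_{q_{k-1}}=\tfrac12 q_{k}$, and to argue by strong induction on the sum $k_1+k_2$. By symmetry we may assume $k_1\le k_2$. When $k_1=1$ the claim is in fact an equality, since $p_{q_1}\,p_{q_{k_2}}=p_1\,p_{q_{k_2}}=2p_{q_{k_2}}=q_{k_2+1}=q_{k_1+k_2}$; this disposes of the base case $k_1=k_2=1$ and of every pair with $k_1=1$.

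For the inductive step suppose $2\le k_1\le k_2$. Applying the induction hypothesis to the pair $(k_1-1,k_2)$, whose sum is smaller, gives $p_{q_{k_1-1}}\,p_{q_{k_2}}\le q_{k_1+k_2-1}$. Since $m\mapsto p_m$ is strictly increasing and $q_{k_1+k_2}=2p_{q_{k_1+k_2-1}}$, this yields $2\,p_{p_{q_{k_1-1}}p_{q_{k_2}}}\le q_{k_1+k_2}$. Hence it suffices to prove $p_{q_{k_1}}\,p_{q_{k_2}}\le 2\,p_{p_{q_{k_1-1}}p_{q_{k_2}}}$. Writing $A=p_{q_{k_1-1}}$ and $B=p_{q_{k_2}}$ and using $q_{k_1}=2A$, so that $p_{q_{k_1}}=p_{2A}$, the whole lemma is reduced to the single inequality $p_{2A}\,B\le 2\,p_{AB}$ for the primes $A=p_{q_{k_1-1}}\le p_{q_{k_2}}=B$ (note $A\ge p_{q_1}=2$).

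It then remains to establish $p_{2A}B\le 2p_{AB}$ for all integers $2\le A\le B$ (in our application $A$ and $B$ are primes); dividing by $2AB$ this is the transparent statement $\tfrac{p_{2A}}{2A}\le \tfrac{p_{AB}}{AB}$, i.e. the average ratio $p_n/n$ is no smaller at the larger index $AB$ than at $2A$. Morally this is immediate because $p_n/n\sim\ln n$ grows, but $p_n/n$ dips locally (e.g. at twin primes), so a rigorous argument needs explicit bounds. I would insert the Rosser--Schoenfeld estimates $n\ln n<p_n$ (all $n\ge1$) and $p_n<n(\ln n+\ln\ln n)$ ($n\ge6$): then $p_{2A}B\le 2AB(\ln(2A)+\ln\ln(2A))$ while $2p_{AB}>2AB\ln(AB)=2AB(\ln A+\ln B)$, so it is enough to check $\ln 2+\ln\ln(2A)\le\ln B$. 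Using $B\ge A$ this reduces to $2\ln(2A)\le A$, which holds once $A$ is moderately large; the finitely many remaining small values of $A$ are verified by hand, using the lower bound $p_{AB}>AB\ln(AB)$ together with $B\ge A$ directly.

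The induction skeleton is routine; the real work — and the only place analytic input enters — is the core inequality $p_{2A}B\le 2p_{AB}$, and within it the bookkeeping of the small cases, since the clean asymptotic bound $2\ln(2A)\le A$ fails for the first few primes $A$. An alternative route, should the small-case analysis prove awkward, is to recast the lemma as $q_{k_1+1}q_{k_2+1}\le 4q_{k_1+k_2}$ and prove instead the log-convexity $q_mq_{m+2}\ge q_{m+1}^2$ for $m\ge2$; the latter reduces, on setting $c=p_{q_m}$, to $\pi(c)\,p_{2c}\ge 2c^2$ (with $\pi$ the prime counting function), which follows cleanly from $\pi(c)>c/\ln c$ and $p_{2c}>2c\ln(2c)$ for $c\ge17$ with only $c=7$ to check by hand, after which a chain of rearrangement (``balancing'') steps $q_iq_j\le q_{i-1}q_{j+1}$ delivers the bound.
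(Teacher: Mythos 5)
Your argument is correct, but it is organised differently from the paper's. Both proofs proceed by induction on $k_1+k_2$ and both ultimately rest on explicit Rosser--Schoenfeld-type estimates for $p_n$, but the inductive decompositions are genuinely different. The paper peels off the \emph{larger} index: it applies the hypothesis to $(k_1,k_2-1)$ and then shows $2p_{q_{k_1}}p_{q_{k_2-1}}\big(\ln(\cdot)+\ln\ln(\cdot)-1.0072629\big)\ge p_{q_{k_1}}p_{q_{k_2}}$ by combining Robin's lower bound with the Rosser--Schoenfeld upper bound with constant $-0.5$, which is only valid for $m\ge 20$ and therefore forces $k_2\ge 4$ and a table of base cases for $k_1+k_2\le 6$. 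You peel off the \emph{smaller} index, apply the monotone map $m\mapsto p_m$ to the induction inequality --- a step the paper never takes --- and thereby isolate the self-contained core inequality $p_{2A}B\le 2p_{AB}$, i.e.\ the near-monotonicity of $p_n/n$, proved with the weaker and more standard bounds $p_n>n\ln n$ and $p_n<n(\ln n+\ln\ln n)$. This buys a lighter case analysis: only the family $k_1=1$ (an identity) and the single value $A=2$ need separate treatment, versus the paper's seven-entry table. One small point to tighten: for $A=2$ the estimate $2p_{2B}>4B\ln(2B)\ge 7B$ fails at $B=2$, so ``$B\ge A$'' alone is not quite enough there; you need $B=p_{q_{k_2}}\ge p_{q_2}=7$, which holds because $k_2\ge k_1=2$. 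With that observation the $A=2$ check goes through and the proof is complete. (Your closing log-convexity reformulation via $\pi(c)\,p_{2c}\ge 2c^2$ is also viable and arguably the cleanest statement of what is really being proved, but as written it is only a sketch.)
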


The first few values of the sequence $(q_k)_{k\geq 1}$ are
\begin{align*}
&q_1=1,~q_2=2p_1=4,~q_3=2p_4=14,~q_4=2p_{14}=86,~q_5=2p_{86}=886,\\
&q_6=2p_{886}=13766
\end{align*}
since $p_{14}=43,~p_{86}=443$ and $p_{886}=6883$.

\medskip
To prove the lemma, we shall need the following two theorems concerning primes:

\begin{theorem}[Robin~\cite{robin1983estimation}]\label{Robin}
For any integer $m\geq 2$, we have
\begin{align*}
p_m \geq m \big(\ln(m)+\ln(\ln(m)) -1.0072629\big)\,.
\end{align*}
\end{theorem}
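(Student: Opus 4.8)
The plan is to prove the inequality $p_{q_{k_1}} \cdot p_{q_{k_2}} \leq q_{k_1+k_2}$ by exploiting the recursive structure of $(q_k)$ together with a crude but sufficient upper bound on how fast the prime-counting growth compounds. The key algebraic observation is that by the very definition of the sequence, $q_{k+1} = 2p_{q_k}$, so that $p_{q_k} = q_{k+1}/2$. This lets me rewrite the left-hand side of the target inequality entirely in terms of the sequence itself: $p_{q_{k_1}} \cdot p_{q_{k_2}} = \tfrac{1}{4} q_{k_1+1} q_{k_2+1}$. Thus the statement I must establish is equivalent to
\begin{align*}
q_{k_1+1}\, q_{k_2+1} \leq 4\, q_{k_1+k_2}\,,
\end{align*}
which is a purely internal relation among the $q_k$'s and no longer mentions primes explicitly. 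This reformulation is the heart of the approach, and I would carry it out first.

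Next I would attack the reformulated inequality by induction. The natural induction is a double induction on $k_1 + k_2$, or, after fixing one index, a single induction on the other. First I would dispose of the base cases where one of the indices equals $1$: since $q_1 = 1$ and $q_2 = 4$, the inequality $q_{k+1}\cdot q_2 \le 4 q_{k+1}$ reads $4 q_{k+1} \le 4 q_{k+1}$, which holds with equality — a reassuring sanity check that the constant $4$ and the shift are correctly tuned. For the inductive step with both indices at least $2$, I would use the recursion on the largest-index term $q_{k_1+k_2}$ on the right, namely $q_{k_1+k_2} = 2 p_{q_{k_1+k_2-1}}$, and try to bound $p_{q_{k_1+k_2-1}}$ from below using the inductive hypothesis applied to a smaller pair of indices. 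The expectation is that the super-exponential growth of $(q_k)$ gives enormous slack, so that once the correct telescoping is set up the inequality closes comfortably.

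The main obstacle will be producing a usable \emph{lower} bound on the prime values $p_{q_k}$ that is strong enough to drive the induction, since the recursion writes $q_{k_1+k_2}$ in terms of a prime of a large index and I need that prime to dominate a product of two smaller primes. This is exactly where I would invoke Robin's lower bound (Theorem~\ref{Robin}), which guarantees $p_m \geq m\bigl(\ln m + \ln\ln m - 1.0072629\bigr)$ for $m \ge 2$; in particular $p_m$ grows faster than $m\ln m$, so $p_{q_{k-1}}$ is substantially larger than $q_{k-1}$ and the factor-of-two in the recursion is more than compensated. Concretely, I would show that for the relevant indices $m = q_{k_1+k_2-1}$ the quantity $\ln m + \ln\ln m - 1.0072629$ exceeds the modest multiplicative factor needed to absorb the second prime, and then verify the few small cases (small $k_1,k_2$) by hand against the explicit values $q_1,\dots,q_6$ listed above, to cover the range where the asymptotic estimate is not yet comfortably effective. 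The delicate point is checking that the threshold in Robin's bound ($m \ge 2$) and the explicit constant leave no gap at the smallest nontrivial pairs; this is the step I would verify most carefully, with the tabulated values serving as the bridge between the finitely many exceptional cases and the general inductive argument.
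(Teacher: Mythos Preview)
Your proposal does not address the stated theorem at all. Theorem~\ref{Robin} is Robin's explicit lower bound $p_m \geq m(\ln m + \ln\ln m - 1.0072629)$ for the $m$-th prime; the paper does not prove it but simply quotes it from~\cite{robin1983estimation} as an external analytic-number-theory result. What you have written is instead a proof sketch for Lemma~\ref{lemforqk} (the inequality $p_{q_{k_1}}\cdot p_{q_{k_2}}\le q_{k_1+k_2}$), and you explicitly \emph{invoke} Theorem~\ref{Robin} as a tool inside that sketch. So as a proof of the theorem actually in question, the proposal is circular: it assumes the very inequality it is supposed to establish.

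If one reads your text as an attempted proof of Lemma~\ref{lemforqk} instead, the reformulation $p_{q_{k_1}}p_{q_{k_2}}=\tfrac14 q_{k_1+1}q_{k_2+1}$ is correct and pleasant, and the base case $k_1=1$ matches the paper. However, the inductive step remains a genuine gap. You only mention using Robin's \emph{lower} bound on $p_{q_{k_1+k_2-1}}$, but to close the induction you must also control $p_{q_{k_2}}$ from \emph{above}: after applying the induction hypothesis you need something like $4\,p_{q_{k_1+1}q_{k_2}/4}\ge q_{k_1+1}\,p_{q_{k_2}}$, and a lower bound alone cannot deliver this. The paper's proof of Lemma~\ref{lemforqk} uses exactly this pairing --- Robin's lower bound (Theorem~\ref{Robin}) together with the Rosser--Schoenfeld upper bound (Theorem~\ref{RosSchoen}) --- and the interplay of the two constants $-1.0072629$ and $-0.5$ with the extra factor $\ln 2$ is what makes the inequality go through. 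Your sketch omits the upper-bound ingredient entirely, so as written the induction would not close.
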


\begin{theorem}[Rosser-Schoenfeld~\cite{rosser1962approximate}]\label{RosSchoen}
For any integer $m\geq 20$, we have
\begin{align*}
p_m \leq m \big(\ln(m)+\ln(\ln(m)) -0.5\big)\,.
\end{align*}
\end{theorem}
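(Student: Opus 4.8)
The plan is to convert the stated upper bound for the $m$-th prime into a \emph{lower} bound for the prime-counting function and then supply that lower bound by explicit analytic means. Write $f(m)=m\big(\ln(m)+\ln(\ln(m))-0.5\big)$. Since the sequence $(p_m)$ is exactly the inverse of the nondecreasing step function $\pi(x)=\#\{p\le x:p\text{ prime}\}$, the inequality $p_m\le f(m)$ holds precisely when there are at least $m$ primes not exceeding $f(m)$, that is, when $\pi\big(f(m)\big)\ge m$. Thus the whole theorem reduces to establishing a sufficiently sharp explicit lower bound for $\pi(x)$ and substituting $x=f(m)$.

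For the lower bound on $\pi$, I would route everything through the Chebyshev function $\psi(x)=\sum_{p^k\le x}\ln p$. Starting from the explicit formula $\psi(x)=x-\sum_{\rho}\frac{x^{\rho}}{\rho}-\ln(2\pi)-\tfrac12\ln(1-x^{-2})$, with $\rho$ ranging over the nontrivial zeros of $\zeta$, the key is to bound the sum over zeros. This I would do by combining a numerical verification that every zero up to some height $T_0$ lies on the critical line with a classical (de la Vallée Poussin type) zero-free region for $|t|>T_0$; together these yield an explicit bound $|\psi(x)-x|\le\varepsilon(x)\,x$ with $\varepsilon$ decreasing. One then transfers to $\theta(x)=\sum_{p\le x}\ln p$ (the difference $\psi-\theta=\sum_{k\ge2}\theta(x^{1/k})$ has size $O(\sqrt{x})$ and is explicitly controllable) and from $\theta$ to $\pi$ by partial summation,
\begin{align*}
\pi(x)=\frac{\theta(x)}{\ln x}+\int_{2}^{x}\frac{\theta(t)}{t\,\ln^{2}t}\,dt\,,
\end{align*}
producing an explicit lower bound of the form $\pi(x)\ge\frac{x}{\ln x}\big(1+\frac1{\ln x}+\frac{c}{\ln^{2}x}\big)$ valid once $x$ exceeds an effective threshold.

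It then remains to substitute $x=f(m)$. Here I would expand $\ln f(m)=\ln m+\ln\ln m+O\!\big(\tfrac{\ln\ln m}{\ln m}\big)$, insert it into the lower bound for $\pi$, and expand the result in descending powers of $\ln m$. The point is that $f$ carries the constant $-0.5$ whereas the genuine asymptotic $p_m\sim m(\ln m+\ln\ln m-1)$ carries $-1$; this discrepancy leaves a positive margin of order $\tfrac{0.5\,m}{\ln m}$ in the count $\pi\big(f(m)\big)-m$, so $\pi\big(f(m)\big)\ge m$ follows for all $m$ beyond some explicit $m_0$ as long as the error in the $\pi$-bound of the previous step is held below this margin. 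The remaining finitely many cases $20\le m<m_0$ are then disposed of by directly consulting a table of the first few primes.

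The hard part is unquestionably the analytic core of the middle step: obtaining a lower bound for $\psi(x)$ (equivalently $\pi(x)$) with an error term small enough that the $0.5$-margin survives the inversion, since that margin, translated back to the $x$-scale, is only of order $x/\ln^{2}x$. This is where the real content of Rosser and Schoenfeld lies -- the explicit formula, a region shown free of zeros via a numerical check up to height $T_0$ together with a zero-free estimate beyond it, and a careful majorisation of $\sum_{\rho}x^{\rho}/\rho$. By comparison the reduction to $\pi$ and the final substitution-and-tabulation are routine bookkeeping.
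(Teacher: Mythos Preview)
The paper does not prove this statement at all: Theorem~\ref{RosSchoen} is quoted verbatim from Rosser and Schoenfeld~\cite{rosser1962approximate} and used as a black box in the proof of Lemma~\ref{lemforqk}. There is therefore no ``paper's own proof'' to compare your proposal against.

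That said, your outline is a faithful high-level summary of how Rosser and Schoenfeld themselves obtain bounds of this type: explicit formula for $\psi(x)$, a numerically verified zero-free region up to some height combined with a classical zero-free region beyond it to control $\sum_\rho x^\rho/\rho$, passage from $\psi$ to $\theta$ to $\pi$ by partial summation, inversion to a bound on $p_m$, and a finite check for small $m$. As a sketch it is accurate in spirit; as a proof it is of course only a roadmap, with all the genuine work (the explicit constants in the zero-sum estimate and in the error terms, and showing they are small enough to preserve the $0.5$ margin down to $m=20$) deferred to the cited source.
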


\begin{proof}[Proof of Lemma~\ref{lemforqk}]
We may assume that $k_1 \leq k_2$. Set $k=k_1+k_2$. For $k=2$, we have $k_1=k_2=1$ and $p_{q_{k_1}} \cdot p_{q_{k_2}}=p_{q_1}^2=p_1^2=4=2p_1=2p_{q_1}=q_2=q_{k_1+k_2}$. For $k=3$, we have $k_1=1,~k_2=2$ and $p_{q_{k_1}} \cdot p_{q_{k_2}}=p_{q_1} \cdot p_{q_2}=p_1\cdot p_4=14=2p_4=2p_{q_2}=q_3=q_{k_1+k_2}$. So the inequality holds for $k\in\{2,3\}$. In fact, for $k\leq 6$, the inequality is easily verified; see Table~\ref{valk6} and the values of $q_1,\ldots,q_6$ given above. We then assume that $k>6$ and continue the proof of the inequality by induction on $k$.
\begin{table}[htbp]\centering
\caption{$k_1+k_2\in \{4,5, 6\}$}\label{valk6}
\begin{tabular}{c|c|c||c|c||c|c|c}
$(k_1,k_2)$                     & (1,3) & (2,2) & (1,4) & (2,3) & (1,5) & (2,4) & (3,3) \\ \hline
$p_{q_{k_1}} \cdot p_{q_{k_2}}$ & 86    & 49    & 886   & 301   & 13766 & 3101  & 1849 \\ \hline
\end{tabular}
\end{table}

Let $k_1,k_2$ be two arbitrary positive integers such that $k_1\leq k_2$ and $k_1+k_2=k$. We may assume that $k_1\geq 2$ because for $k_1=1$, the lemma clearly holds with equality. We may also assume that $k_2\geq 4$ because $k>6$ by assumption. 

Now note that $p_{q_{k_1}}\geq 4$ (since $k_1\geq 2$ implies that $q_{k_1}\geq 4$) and thus
\begin{align*}
\ln (p_{q_{k_1}}\cdot p_{q_{k_2-1}})&+\ln(\ln (p_{q_{k_1}}\cdot p_{q_{k_2-1}}))-1.0072629\\
&\geq \ln(2)+\ln (2 p_{q_{k_2-1}})+\ln(\ln (2 p_{q_{k_2-1}}))-1.0072629\\
& \geq \ln (2 p_{q_{k_2-1}})+\ln(\ln (2 p_{q_{k_2-1}}))-0.5\\
&=\ln (q_{k_2})+\ln(\ln (q_{k_2}))-0.5
\end{align*}
since $\ln(2)- 1.0072629=-0.314116\ldots$. Thus, we have
\begin{align*}
2p_{q_{k_2-1}}&\Big( \ln (p_{q_{k_1}}\cdot p_{q_{k_2-1}})+\ln(\ln (p_{q_{k_1}}\cdot p_{q_{k_2-1}}))-1.0072629\Big)\\
&\geq q_{k_2} \Big( \ln (q_{k_2})+\ln(\ln (q_{k_2}))-0.5\Big)\\
& \geq p_{q_{k_2}}\,,
\end{align*}
where the inequality in the last step follows from Theorem~\ref{RosSchoen} since $k_2\geq 4$ implies that $q_{k_2}\geq 20$.

It follows that
\begin{align*}
2p_{q_{k_1}} \cdot p_{q_{k_2-1}} \Big( \ln (p_{q_{k_1}}\cdot p_{q_{k_2-1}})&+\ln(\ln (p_{q_{k_1}}\cdot p_{q_{k_2-1}}))-1.0072629\Big)\\
& \geq p_{q_{k_1}} \cdot p_{q_{k_2}}\,,
\end{align*}
so that Theorem~\ref{Robin} together with the induction hypothesis gives us
\begin{align*}
q_k=2p_{q_{k-1}}&\geq 2 q_{k-1} \Big( \ln (q_{k-1})+\ln(\ln (q_{k-1}))-1.0072629\Big)\\
&\geq 2 p_{q_{k_1}} \cdot p_{q_{k_2-1}} \\
& \hspace*{1cm} \Big( \ln (p_{q_{k_1}} \cdot p_{q_{k_2-1}})+\ln(\ln (p_{q_{k_1}} \cdot p_{q_{k_2-1}}))-1.0072629\Big)\\
&\geq  p_{q_{k_1}} \cdot p_{q_{k_2}}
\end{align*}
since $k_1+k_2=k$ and $k> 6$ implies that $q_{k-1} \geq 2$. Therefore, we obtain
\begin{align*}
q_k=q_{k_1+k_2}\geq p_{q_{k_1}} \cdot p_{q_{k_2}} 
\end{align*}
for all pairs $(k_1,k_2)$ of positive integers. This completes the induction step as well as the proof of the lemma.
\end{proof}

We can now state and prove our next theorem:

\begin{theorem}
Among all topological trees with $n$ leaves, the binary caterpillar $F_n$ has the greatest Matula number. Furthermore, $M(F_n)=q_n$, where $q_n$ is the positive integer defined in Lemma~\ref{lemforqk}.
\end{theorem}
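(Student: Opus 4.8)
The plan is to build directly on the reduction already established in Section~\ref{Towards}: since the maximiser of the Matula number among all $n$-leaf topological trees must be a binary tree, it suffices to show that $F_n$ has the largest Matula number among binary trees with $n$ leaves and that this maximal value equals $q_n$. I would establish both claims by induction on $n$, with Lemma~\ref{lemforqk} supplying the one nontrivial inequality.

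First I would verify the formula $M(F_n)=q_n$. By construction, the binary caterpillar $F_n$ is the rooted tree whose two branches are a single leaf $K_1$ and the smaller caterpillar $F_{n-1}$. Hence the definition of the Matula number gives
\begin{align*}
M(F_n)=p_{M(K_1)}\cdot p_{M(F_{n-1})}=p_1\cdot p_{M(F_{n-1})}=2\,p_{M(F_{n-1})}\,.
\end{align*}
Since $M(F_1)=M(K_1)=1=q_1$ and the sequence satisfies $q_k=2p_{q_{k-1}}$, an immediate induction yields $M(F_n)=q_n$ for all $n\geq 1$.

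For the extremal claim I would prove that $M(T)\leq q_n$ for every binary tree $T$ with $n$ leaves, again by induction on $n$. The cases $n\in\{1,2\}$ are trivial, as there is only one binary tree of each type. For the inductive step, write the root of $T$ as carrying two branches $T_1$ and $T_2$ with $n_1$ and $n_2$ leaves, where $n_1+n_2=n$ and $n_1,n_2\geq 1$ (so $n_1,n_2<n$). The induction hypothesis gives $M(T_i)\leq q_{n_i}$, and since $p_m$ is increasing in $m$ we obtain
\begin{align*}
M(T)=p_{M(T_1)}\cdot p_{M(T_2)}\leq p_{q_{n_1}}\cdot p_{q_{n_2}}\leq q_{n_1+n_2}=q_n\,,
\end{align*}
where the final inequality is precisely Lemma~\ref{lemforqk}. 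Combined with $M(F_n)=q_n$, this shows that $F_n$ attains the maximum.

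The genuine difficulty of the whole argument is concentrated in Lemma~\ref{lemforqk}, which controls the prime-counting growth through the Robin and Rosser--Schoenfeld estimates; once that inequality is available, the theorem follows from the two clean inductions above. If uniqueness of the maximiser is wanted, I would additionally track the equality cases: equality in Lemma~\ref{lemforqk} forces $\min(n_1,n_2)=1$ — the values in Table~\ref{valk6} together with the slack in the inductive estimates show strict inequality whenever $n_1,n_2\geq 2$ — and this recursively pins down the caterpillar shape, identifying $F_n$ as the unique maximiser.
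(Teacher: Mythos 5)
Your proposal is correct and follows essentially the same route as the paper: first verifying $M(F_n)=q_n$ from the recursion $M(F_k)=p_1\cdot p_{M(F_{k-1})}$, then using the Section~\ref{Towards} reduction to binary trees and an induction on the number of leaves in which Lemma~\ref{lemforqk} supplies the key inequality $p_{q_{n_1}}\cdot p_{q_{n_2}}\leq q_{n_1+n_2}$. The only difference is your optional remark on uniqueness of the maximiser, which the paper does not claim or pursue.
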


\begin{proof}
First of all, note that for $k>1$, we have $M(F_k)=p_1 \cdot p_{M(F_{k-1})}$ and by iteration on $k$, we obtain $M(F_k)=q_k$ for every $k$.

It is shown in Section~\ref{Towards} that the tree that maximises the Matula number among all topological trees with a prescribed number of leaves must be a binary tree. Now, let us use induction on $k$ to prove that $M(B)\leq M(F_k)$ for every binary tree $B$ with $k$ leaves.

The statement of the theorem is trivial for $k\leq 3$ since in this case, there is only one possibility for the shape of the binary tree. Assume that the statement holds for all binary trees with at most $k-1 \geq 3$ leaves and consider a binary tree $B$ with $k$ leaves. Denote by $B_1$ and $B_2$ the two branches of $B$. Since $M(B)=p_{M(B_1)}\cdot p_{M(B_2)}$, we obtain $M(B)\leq p_{M(F_{|B_1|})}\cdot p_{M(F_{|B_2|})}$ by the induction hypothesis. Consequently, invoking Lemma~\ref{lemforqk}, one obtains
\begin{align*}
M(B)\leq p_{M(F_{|B_1|})}\cdot p_{M(F_{|B_2|})} \leq M(F_{|B|})
\end{align*}
since $M(F_k)=q_k$. The theorem follows.
\end{proof}

\section{Conclusion}
The fact that the binary caterpillar $F_n$ is maximal in the set of all $n$-leaf topological trees with respect to the Matula number, also shows that $F_n$ is maximal in the set of all $n$-leaf binary trees with respect to the Matula number. Probably, the next natural problem for a future study will be to characterise the tree that minimises the Matula number over all binary trees (rooted trees in which every vertex has outdegree exactly $0$ or $2$) with a given number of leaves. 

Given a positive integer $k>1$, let $s$ be the unique nonnegative integer satisfying $2^{s+1}\leq k < 2^{s+2}$. Then write $k=r+2^{1+s}$ with $r$ the residue of $k$ modulo $2^{1+s}$. Define the sequence $(l_k)_{k\geq 1}$ of positive integers recursively by
\begin{align*}
l_k=l_{r+2^{1+s}}=\left\{
\begin{array}{rcl}
p_{l_{2^s}}\cdot p_{l_{r+2^s}}& \mbox{if} & r\leq 2^s \\ 
p_{l_r}\cdot p_{l_{2^{1+s}}} & \mbox{if} & r > 2^s\,,
\end{array}\right.
\end{align*}
starting with $l_1=1$. The sequence begins
\begin{align*}
1,4,14,49,301,1589,9761,51529,452411,3041573, 23140153,\ldots\,.
\end{align*}

Calculations suggest that $M(B)\geq l_k$ for every binary tree $B$ with $k\leq 18$ leaves. The $k$-leaf minimal binary tree $S_k$ in this case is also obtained in the same recursive way: the branches of $S_k=S_{r+2^{1+s}}$ are the binary trees $S_{2^s}$ and $S_{r+2^s}$ if $r\leq 2^s$, and the binary trees $S_r$ and $S_{2^{1+s}}$ if $ r > 2^s$, the starting tree being the tree $K_1$. For example, we show in Figure~\ref{MinBin} the minimal binary trees $S_6$ and $S_{13}$.
\begin{figure}[!h]\centering
\begin{subfigure}[b]{0.5\textwidth} \centering 	
	\begin{tikzpicture}
	\draw [fill=black] (0,0) circle (2.5 pt);
	\draw [fill=black] (-1,-1) circle (1.5 pt);
	\draw [fill=black] (1,-1) circle (1.5 pt);
	
    \draw [fill=black] (-1.5,-2) circle (1.5 pt);
    \draw [fill=black] (-0.5,-2) circle (1.5 pt);
	
	\draw [fill=black] (0.5,-2) circle (1.5 pt);
	\draw [fill=black] (1.5,-2) circle (1.5 pt);
	
	\draw [fill=black] (0.2,-2.5) circle (1.5 pt);
	\draw [fill=black] (0.8,-2.5) circle (1.5 pt);
	
	\draw [fill=black] (1.2,-2.5) circle (1.5 pt);
	\draw [fill=black] (1.8,-2.5) circle (1.5 pt);
	
	\draw [line width=1pt] (0,0) -- (-1,-1);
	\draw [line width=1pt] (0,0) -- (1,-1);
	
	\draw [line width=1pt] (-1,-1) -- (-1.5,-2);
	\draw [line width=1pt] (-1,-1)-- (-0.5,-2);
	
	\draw [line width=1pt] (1,-1) -- (0.5,-2);
	\draw [line width=1pt] (1,-1)-- (1.5,-2);
	
	\draw [line width=1pt] (0.5,-2) -- (0.2,-2.5);
	\draw [line width=1pt] (0.5,-2)-- (0.8,-2.5);
	
	\draw [line width=1pt] (1.5,-2) -- (1.2,-2.5);
	\draw [line width=1pt] (1.5,-2)-- (1.8,-2.5);
\end{tikzpicture}
\caption{$S_6$}
\end{subfigure} \vspace*{1cm}

\begin{subfigure}[b]{1\textwidth} \centering 
\begin{tikzpicture}[scale=0.85]	
\draw [fill=black] (0,0) circle (2.5 pt);
\draw [fill=black] (-3,-1) circle (1.5 pt);
\draw [fill=black] (3,-1) circle (1.5 pt);
\draw [line width=1pt] (0,0) -- (-3,-1);
\draw [line width=1pt] (0,0) -- (3,-1);

\draw [fill=black] (-5,-2) circle (1.5 pt);	
\draw [fill=black] (-1,-2) circle (1.5 pt);	
\draw [line width=1pt] (-3,-1) -- (-5,-2);
\draw [line width=1pt] (-3,-1) -- (-1,-2);

\draw [fill=black] (1,-2) circle (1.5 pt);
\draw [fill=black] (5,-2) circle (1.5 pt);
\draw [line width=1pt] (3,-1) -- (1,-2);
\draw [line width=1pt] (3,-1) -- (5,-2);

\draw [fill=black] (-5.8,-3) circle (1.5 pt);	
\draw [fill=black] (-4.2,-3) circle (1.5 pt);
\draw [line width=1pt] (-5,-2) -- (-5.8,-3);
\draw [line width=1pt] (-5,-2) -- (-4.2,-3);

\draw [fill=black] (-1.8,-3) circle (1.5 pt);	
\draw [fill=black] (-0.2,-3) circle (1.5 pt);	
\draw [line width=1pt] (-1,-2) -- (-1.8,-3);
\draw [line width=1pt] (-1,-2) -- (-0.2,-3);

\draw [fill=black] (0.2,-3) circle (1.5 pt);
\draw [fill=black] (1.8,-3) circle (1.5 pt);
\draw [line width=1pt] (1,-2) -- (0.2,-3);
\draw [line width=1pt] (1,-2) -- (1.8,-3);

\draw [fill=black] (4.2,-3) circle (1.5 pt);
\draw [fill=black] (5.8,-3) circle (1.5 pt);
\draw [line width=1pt] (5,-2) -- (4.2,-3);
\draw [line width=1pt] (5,-2) -- (5.8,-3);

\draw [fill=black] (-6.3,-4) circle (1.5 pt);
\draw [fill=black] (-5.3,-4) circle (1.5 pt);
\draw [line width=1pt] (-5.8,-3) -- (-6.3,-4);
\draw [line width=1pt] (-5.8,-3) -- (-5.3,-4);

\draw [fill=black] (-4.7,-4) circle (1.5 pt);
\draw [fill=black] (-3.7,-4) circle (1.5 pt);
\draw [line width=1pt] (-4.2,-3) -- (-4.7,-4);
\draw [line width=1pt] (-4.2,-3) -- (-3.7,-4);

\draw [fill=black] (-2.3,-4) circle (1.5 pt);
\draw [fill=black] (-1.3,-4) circle (1.5 pt);
\draw [line width=1pt] (-1.8,-3) -- (-2.3,-4);
\draw [line width=1pt] (-1.8,-3) -- (-1.3,-4);

\draw [fill=black] (-0.7,-4) circle (1.5 pt);
\draw [fill=black] (0.3,-4) circle (1.5 pt);
\draw [line width=1pt] (-0.2,-3) -- (-0.7,-4);
\draw [line width=1pt] (-0.2,-3) -- (0.3,-4);

\draw [fill=black] (1.3,-4) circle (1.5 pt);
\draw [fill=black] (2.3,-4) circle (1.5 pt);
\draw [line width=1pt] (1.8,-3) -- (1.3,-4);
\draw [line width=1pt] (1.8,-3) -- (2.3,-4);
\end{tikzpicture}
\caption{$S_{13}$}
\end{subfigure} 

\caption{The minimal binary trees $S_6$ and $S_{13}$.}\label{MinBin}	
\end{figure}
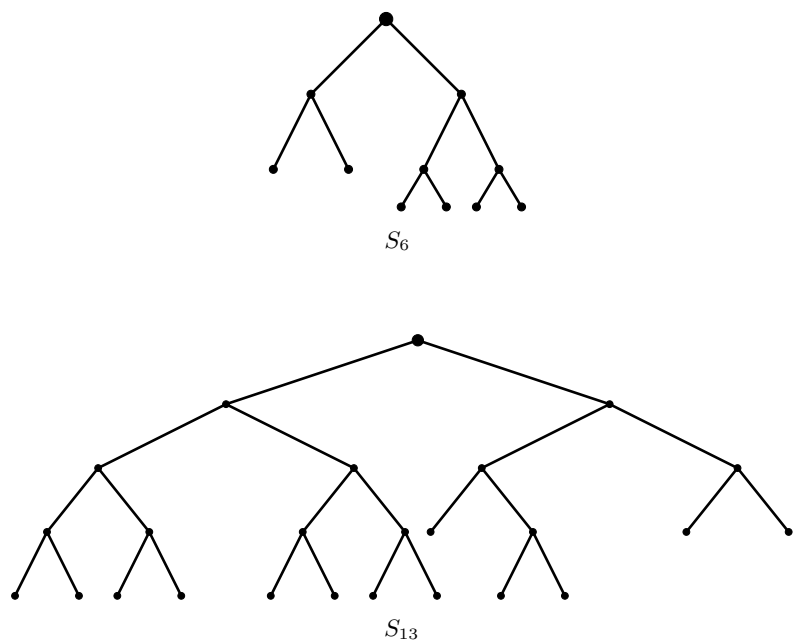

Note that the numbers $l_k$ soon get really large; for instance, the Matula number of $S_{18}$ is $32078140605053$. So it is difficult to check our evidence for more values of $k$; although
\begin{align*}
1.07555\times 10^{15} \leq p_{32078140605053} \leq 1.09182\times 10^{15}\,,
\end{align*}
a `standard' computer already fails to produce the prime number $$p_{32078140605053}.$$

On the other hand, since ``weak'' binary trees (every vertex has degree at most $3$) are more realistic molecular graphs, the problem of finding the extremal tree structures among these trees, given the number of vertices, may also be of interest. Recall that for the more general case of rooted trees, the problem has already been solved by Gutman and Ivi{\'c}~\cite{gutman1996matula}.

\section{Acknowledgements}
The author was supported by Stellenbosch University in association with the African Institute for Mathematical Sciences (AIMS) South Africa.

\end{document}